\documentclass[11pt]{amsart}
\usepackage{enumerate}
\usepackage{comment}
\usepackage{graphicx}
\usepackage{amssymb}
\usepackage{xcolor}

\newtheorem{theorem}{Theorem}

\newtheorem{definition}[theorem]{Definition}

\newtheorem{corollary}[theorem]{Corollary}

\newtheorem{question}{Question}

\newcommand{\comm}[1]{}
\newcommand{\beqa}{\begin{eqnarray*}}
\newcommand{\eeqa}{\end{eqnarray*}}
\newcommand{\beqn}{\begin{eqnarray}}
\newcommand{\eeqn}{\end{eqnarray}}
\newcommand{\e}{\varepsilon}

\newcounter{cnt1}
\newcounter{cnt2}
\newcounter{cnt3}
\newcounter{cnt4}
\newcommand{\blr}{\begin{list}{$($\roman{cnt1}$)$}
{\usecounter{cnt1} \setlength{\topsep}{0pt}
\setlength{\itemsep}{0pt}}}
\newcommand{\bla}{\begin{list}{$($\alph{cnt2}$)$}
{\usecounter{cnt2} \setlength{\topsep}{0pt}
\setlength{\itemsep}{0pt}}}
\newcommand{\bln}{\begin{list}{$($\arabic{cnt3}$)$}
{\usecounter{cnt3} \setlength{\topsep}{0pt}
\setlength{\itemsep}{0pt}}}
\newcommand{\blR}{\begin{list}{$($\Roman{cnt4}$)$}
{\usecounter{cnt4} \setlength{\topsep}{0pt}
\setlength{\itemsep}{0pt}}}
\newcommand{\el}{\end{list}}

\begin{document}

\title[On Hereditary MIP]{A short note on hereditary Mazur intersection property}

\author[Bandyopadhyay]{Pradipta Bandyopadhyay}
\address[Pradipta Bandyopadhyay]{Stat--Math Division,
Indian Statistical Institute, 203, B.~T. Road, Kolkata
700108, India.}
\email{pradipta@isical.ac.in}

\author[Gothwal]{Deepak Gothwal}
\address[Deepak Gothwal]{Department of Mathematics, Indian Institue of Technology, Kharagpur, 
721302, India.}
\email{deepakgothwal190496@gmail.com}

\subjclass[2020]{Primary 46B20}

\date{\today}

\keywords{Smoothness, Fr\'echet smooth, MIP, Hereditary Mazur intersection property, Asplund spaces}

\begin{abstract}
In this note, we prove that hereditary Mazur intersection property (MIP) does not imply Fr\'echet smoothness using an example by Borwein and Fabian in \cite{BF}.
\end{abstract}

\maketitle

\section{Introduction}
\begin{definition} \rm
A Banach space $(X, \|\cdot\|)$ is said to have the \emph{Mazur intersection property (MIP)} if every closed, bounded, and convex set in $(X, \|\cdot\|)$ is the intersection of the closed balls containing it.
\end{definition}

MIP has been extensively studied over several decades and still continues to draw attention due to its rich geometric influence on the Banach space in question. A complete characterisation of MIP was obtained in \cite[Theorem 2.1]{GGS}. Earlier, Mazur himself \cite{Ma} proved that Fr\'echet smoothness of $(X, \|\cdot\|)$ implies MIP. The converse fails even in finite dimensions (see \cite[Page 43]{Ba}). But, a separable MIP space has separable dual \cite[Theorem 2.1 (iii)]{GGS}, and hence, is an Asplund space. Therefore, it has a Fr\'echet smooth renorming \cite[Theorem II.3.1]{DGZ}. Even this connection breaks down in non-separable Banach spaces. The fact that every Banach space can be isometrically embedded in a Banach space with MIP \cite{MS1} adds some notoriety to the behaviour of spaces with MIP. In particular, this shows that MIP is not hereditary.

On the other hand, Fr\'echet smoothness is clearly a hereditary property. Thus, Mazur's result actually shows that Fr\'echet smoothness of $(X, \|\cdot\|)$ implies that it has the hereditary MIP, that is, $X$ and all its subspaces have MIP.

Now, in this form, is the converse of Mazur's result true? That is, does the hereditary MIP imply Fr\'echet smoothness? Notice that if $(X, \|\cdot\|)$ has the hereditary MIP, then \bla
\item Every separable subspace of $(X, \|\cdot\|)$ has MIP, and hence, has separable dual. Therefore, $X$ is Asplund \cite[Theorem 2.14]{Ph2}.
\item $(X, \|\cdot\|)$ is smooth. To see this, note that for two dimensional spaces, smoothness and MIP are equivalent \cite{Ph}. Also, if every two dimensional subspace of $(X, \|\cdot\|)$ has MIP, and hence, is smooth, then $(X, \|\cdot\|)$ is smooth \cite[Propositon 5.4.21]{Me}. Hence, our question has an affirmative answer in finite dimensions.
\item The duality map on $(X, \|\cdot\|)$ and all its subspaces are ``quasi-continuous'' \cite[p 114]{GGS}.
\el
Thus, it seems natural to conjecture that hereditary MIP implies Fr\'echet smoothness.

In this note, we obtain some sufficient conditions for hereditary MIP in terms of the set $N(X)$ of non-Fr\'echet smooth points of $S(X)$. In particular, we show that if $(X, \|\cdot\|)$ is smooth and $N(X)$ is finite, then $(X, \|\cdot\|)$ has the hereditary MIP.

Borwein and Fabian in \cite[Theorem 4]{BF} proved an interesting result that an infinite dimensional WCG Asplund space $X$ admits an equivalent smooth norm $\|\cdot\|$ such that $N(X) = \{\pm x_0\}$ for some $x_0 \in S(X,\|\cdot\|)$. Thus, our results show that the Borwein-Fabian norm gives a counterexample to the above conjecture.

A preliminary version of this note is contained in the second author's Ph.~D.\ thesis \cite{GoTh} written under the supervision of the first author. We would like to thank Prof. Gilles Godefroy for drawing our attention to \cite{BF}.

Any undefined notions may be found in \cite{DGZ} or \cite{Me}.

\section{Main Results}
We consider real Banach spaces only. Let $(X,\|\cdot\|)$ be a Banach space and $X^*$ its dual. By a subspace of $(X,\|\cdot\|)$, we mean a closed linear subspace. For $x\in X$ and $r>0$, we denote by $B(x, r)$ \emph{the open ball} $\{y\in X : \|x-y\|<r\}$ and by $B[x, r]$ \emph{the closed ball} $\{y\in X: \|x-y\|\leq r\}$ in $(X,\|\cdot\|)$. We denote by $B(X)$ the \emph{closed unit ball} $\{x\in X : \|x\| \leq 1\}$ and by $S(X)$ the \emph{unit sphere} $\{x \in X : \|x\| = 1\}$.
For $x\in S(X)$, let
\[
D_X (x) = \{f\in S(X^*) : f(x)=1\}.
\]
The multivalued map $D_X$ is called the \emph{duality map}. Any selection of $D_X$ is called a \emph{support mapping}.

We say that $x \in S(X)$ is a \emph{smooth point} if $D_X(x)$ is a singleton. A Banach space $(X, \|\cdot\|)$ is \emph{smooth} if every $x \in S(X)$ is a smooth point.

We say that $x \in S(X)$ is a \emph{Fr\'echet smooth point} if the duality map $D_X$ is single-valued and norm-norm continuous at $x$. A Banach space $(X, \|\cdot\|)$ is \emph{Fr\'echet smooth} if every $x \in S(X)$ is a Fr\'echet smooth point. Let
\beqa
F(X) & := & \{x \in S(X) : \text{the norm is Fr\'echet smooth at } x\} \text{ and}\\
N(X) & := & \{x \in S(X) : \text{the norm is not Fr\'echet smooth at } x\}.
\eeqa

The following necessary or sufficient conditions for MIP follows from \cite[Theorem 2.1]{GGS} and are essentially already observed in \cite{Ph}.

\begin{theorem} \label{char}
For a Banach space $(X, \|\cdot\|)$, consider the following statements~:
\bla
\item $D_X(F(X))$ is norm dense in $S(X^*)$.
\item $(X, \|\cdot\|)$ has MIP.
\item The set of extreme points of $B(X^*)$ is norm dense in $S(X^*)$.
\el
Then $(a) \implies (b) \implies (c)$.
\end{theorem}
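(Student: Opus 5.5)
The plan is to reduce both implications to the Giles--Gregory--Sims characterisation \cite[Theorem 2.1]{GGS}. In the form we need, it says that $(X,\|\cdot\|)$ has MIP if and only if the $w^*$-denting points of $B(X^*)$ are norm dense in $S(X^*)$. Recall that $f \in S(X^*)$ is a $w^*$-denting point of $B(X^*)$ if for every $\e>0$ there is a $w^*$-open slice $S(B(X^*), x, \delta) = \{g \in B(X^*) : g(x) > 1-\delta\}$, with $x \in S(X)$, which contains $f$ and has diameter less than $\e$. With this characterisation in hand, both implications become statements about functionals in $S(X^*)$.

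For $(a) \implies (b)$, I will show that $D_X(F(X))$ consists of $w^*$-denting points of $B(X^*)$. Once that is done, (a) gives exactly the density condition in \cite[Theorem 2.1]{GGS}, and hence MIP. So fix $x \in F(X)$ and let $f$ be the unique element of $D_X(x)$. By Šmulyan's lemma, Fréchet smoothness at $x$ is equivalent to the following: whenever $g_n \in B(X^*)$ and $g_n(x) \to 1$, we have $\|g_n - f\| \to 0$. Quantitatively, for each $\e>0$ there is $\delta>0$ such that $S(B(X^*), x, \delta)$ lies inside the norm ball of radius $\e$ about $f$. This slice contains $f$ and has diameter at most $2\e$, so $f$ is $w^*$-denting. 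The only care needed here is to apply Šmulyan's lemma in the form matching the definition of Fréchet smooth point used in the paper, namely single-valuedness together with norm--norm continuity of $D_X$. The equivalence of that definition with the slice condition is standard; if desired, it can be verified directly by a short sequence argument.

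For $(b) \implies (c)$, assume MIP. Then \cite[Theorem 2.1]{GGS} gives that the $w^*$-denting points are norm dense in $S(X^*)$, so it suffices to show that every $w^*$-denting point $f$ of $B(X^*)$ is an extreme point. Suppose instead that $f = \frac12(g+h)$ with $g,h \in B(X^*)$ and $\|g-h\| = \eta > 0$. Take any $w^*$-open slice $\{k \in B(X^*) : k(x) > 1-\delta\}$ containing $f$. Since $f(x) = \frac12(g(x)+h(x))$ and both $g(x),h(x) \le 1$, at least one of $g,h$ lies in the slice. I will then replace $g,h$ by $f \pm t(g-f)$ for small $t>0$. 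These points lie in $B(X^*)$ by convexity, since they lie on the segment $[g,h]$. Their values at $x$ are within $t\|g-f\|$ of $f(x)$, so for $t$ small both lie in the slice. Their distance apart is $t\eta > 0$.

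This alone does not bound the diameter of the slice from below uniformly, so the argument needs one more step. Instead of shrinking $t$, I will use the segment directly: $f$ lies in the open slice and the slice is relatively open in the segment $[g,h]$. Since $k \mapsto k(x)$ is affine on $[g,h]$ and takes the value $f(x) > 1-\delta$ at the midpoint, at least one endpoint, say $g$, has $g(x) \ge f(x) > 1-\delta$. Hence $g$ is in the slice, and the diameter of the slice is at least $\|g-f\| = \eta/2$ for every slice containing $f$. This contradicts $w^*$-dentability of $f$, so $f$ is extreme. The main obstacle is thus only the correct invocation of \cite[Theorem 2.1]{GGS}; the rest is routine.
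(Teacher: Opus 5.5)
Your proposal is correct and takes essentially the paper's route. The paper gives no separate proof; it just refers to the Giles--Gregory--Sims characterisation \cite[Theorem 2.1]{GGS}, and you supply the two standard facts it leaves implicit: by \v{S}mulyan's lemma each $f \in D_X(F(X))$ is a $w^*$-denting point, and every $w^*$-denting point is extreme. Your detour through $f \pm t(g-f)$ is unnecessary, because the observation that $g$ or $h$ already lies in every slice containing $f$ finishes the argument directly.
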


A Banach space $X$ is called \emph{Weakly Compactly Generated (WCG)} if there is a weakly compact subset $K$ of $X$ such that $\overline{span}(K)=X$.

In particular, all reflexive spaces and all separable spaces are WCG.

Borwein and Fabian in \cite[Theorem 4]{BF} proved that every infinite dimensional WCG Asplund space $X$ admits an equivalent smooth norm $\|\cdot\|$ such that $N(X) = \{\pm x_0\}$ for some $x_0 \in S(X, \|\cdot\|)$.

\begin{theorem} \label{subspacecharthm}
Let $(X, \|\cdot\|)$ be a Banach space such that $N(X) = \{\pm x_0\}$ for some $x_0 \in S(X)$. Then a subspace $Y \subseteq X$ with dim $Y \geq 2$ has MIP if and only if $x_0 \notin Y$ or $D_Y(x_0)$ has empty interior (relative to $S(Y^*)$).
\end{theorem}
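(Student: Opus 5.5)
The plan is to reduce everything to Theorem~\ref{char} applied to the subspace $Y$, using one observation about how Fr\'echet smoothness passes to subspaces. If the norm of $X$ is Fr\'echet differentiable at $y \in S(Y)$, then so is its restriction to $Y$. Hence $F(X) \cap Y \subseteq F(Y)$, and so $N(Y) \subseteq N(X) \cap Y \subseteq \{\pm x_0\} \cap Y$.

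\emph{Case $x_0 \notin Y$.} Then $N(Y) = \emptyset$, so $Y$ is Fr\'echet smooth. By Mazur's theorem it has MIP; equivalently, $D_Y(F(Y)) = D_Y(S(Y))$ is dense in $S(Y^*)$ by Bishop--Phelps, and (a)$\Rightarrow$(b) applies.

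\emph{Sufficiency when $x_0 \in Y$.} Assume $D_Y(x_0)$ has empty interior in $S(Y^*)$. Then so does $D_Y(-x_0) = -D_Y(x_0)$.
\begin{enumerate}[(i)]
\item Both sets are norm closed. A closed set with empty interior is nowhere dense, so $E := D_Y(x_0) \cup D_Y(-x_0)$ is nowhere dense in $S(Y^*)$.
\item By Bishop--Phelps, the norm-attaining functionals are dense in $S(Y^*)$. Each such $f$ lies in $D_Y(y)$ for some $y \in S(Y)$. Either $y \in F(Y)$, or $y = \pm x_0$ and then $f \in E$.
\item Suppose $D_Y(F(Y))$ were not dense. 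Then some nonempty relatively open $U \subseteq S(Y^*)$ misses it. By (ii), the norm-attaining functionals in $U$ lie in $E$ and are dense in $U$. Since $E$ is closed, $U \subseteq E$, contradicting (i).
\end{enumerate}
So condition (a) of Theorem~\ref{char} holds for $Y$, and $Y$ has MIP.

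\emph{Necessity.} Suppose $x_0 \in Y$ and $D_Y(x_0)$ contains a nonempty relatively open subset $U$ of $S(Y^*)$. I will show that no $f \in U$ is an extreme point of $B(Y^*)$. Then the extreme points are not dense in $S(Y^*)$, and (b)$\Rightarrow$(c) of Theorem~\ref{char} shows $Y$ fails MIP.
\begin{enumerate}[(i)]
\item Since $\dim Y \ge 2$, there is $h \in Y^*$ with $h \ne 0$ and $h(x_0) = 0$.
\item For small $t > 0$, the normalized functionals $(f \pm th)/\|f \pm th\|$ are close to $f$, hence lie in $U \subseteq D_Y(x_0)$.
\item Evaluating at $x_0$ gives $1/\|f \pm th\| = 1$, so $\|f \pm th\| = 1$.
\item Thus $f \pm th \in B(Y^*)$ and $f$ is their midpoint, so $f$ is not extreme.
\end{enumerate}

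The main obstacle is the sufficiency step. There one has to make sure that the two exceptional faces $D_Y(\pm x_0)$ cannot absorb an open set of norm-attaining functionals. This is exactly where closedness together with empty interior, and hence nowhere density of a finite union, is used. The rest is routine bookkeeping with Theorem~\ref{char} and Bishop--Phelps.
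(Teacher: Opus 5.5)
Your proof is correct and follows the paper's route: Mazur's theorem when $x_0 \notin Y$; Bishop--Phelps together with the fact that $D_Y(\pm x_0)$ are closed with empty interior to verify condition (a) of Theorem~\ref{char}; and failure of condition (c) for necessity. Your segment $f \pm th$ with $h(x_0)=0$ simply spells out the paper's brief remark that interior points of the face $D_Y(x_0)$ cannot be extreme points of $B(Y^*)$, and it makes explicit where $\dim Y \ge 2$ is needed.
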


\begin{proof}
Assume that $x_0 \notin Y$. Then, $Y$ is Fr\'echet smooth, and hence, has MIP \cite{Ma}.

Now, assume that $x_0 \in Y$ and $D_Y(x_0)$ has empty interior. Note that $N(Y) \subseteq N(X) \cap Y$.  We will verify Theorem~\ref{char} $(a)$.

Let $\e >0$ and $f \in S(Y^*)$ be given. Now, $(B(f, \e) \cap S(Y^*)) \setminus D_Y(N(Y))$ is a nonempty open set in $S(Y^*)$. By Bishop-Phelps Theorem, there exists $y_0 \in S(Y) \setminus N(Y)$ such that $\|f-f_{y_0}\| <\e$, where $f_{y_0} \in D_Y(y_0)$. Now, $y_0 \notin N(Y)$. So, $y_0 \in F(Y)$. Hence, by Theorem~\ref{char} $(a) \implies (b)$, $Y$ has MIP.

Conversely, let $Y$ has MIP and $x_0 \in Y$. Suppose $D_Y(x_0)$ has a non-empty interior in $S(Y^*)$. Since $D_Y(x_0)$ is a face of $B(Y^*)$, none of the interior points of $D_Y(x_0)$ can be an extreme point of $B(Y^*)$. Thus, Theorem~\ref{char} $(c)$
does not hold for $Y$. Hence, $Y$ cannot have MIP.
\end{proof}

\begin{corollary} \label{thmheremip}
Let $(X, \|\cdot\|)$ be such that $N(X) = \{\pm x_0\}$ for some $x_0 \in S(X)$. Then the following are equivalent: \bla
\item $(X, \|\cdot\|)$ has the hereditary MIP.
\item $D_X(x_0)$ is a singleton.
\item For every subspace $Y$ of $(X, \|\cdot\|)$ such that dim $Y \geq 2$ and $x_0 \in Y$, $D_Y(x_0)$ has empty interior (relative to $S(Y^*)$).
\el
\end{corollary}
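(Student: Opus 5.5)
Throughout, I would use Theorem~\ref{subspacecharthm}. I would prove $(a)\iff(c)$ directly from it, then $(b)\Rightarrow(c)$ and $(c)\Rightarrow(b)$ (or $(a)\Rightarrow(b)$).

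\emph{$(a)\iff(c)$.} One-dimensional subspaces trivially have MIP: closed bounded convex sets are segments, and these are intersections of balls. Hence hereditary MIP amounts to every subspace $Y$ with $\dim Y\ge 2$ having MIP. By Theorem~\ref{subspacecharthm}, such a $Y$ has MIP iff $x_0\notin Y$ or $D_Y(x_0)$ has empty interior in $S(Y^*)$. So hereditary MIP is exactly the statement that $D_Y(x_0)$ has empty interior whenever $\dim Y\ge 2$ and $x_0\in Y$. This is $(c)$.

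\emph{$(b)\Rightarrow(c)$.} Suppose $D_X(x_0)=\{f\}$ and $Y\ni x_0$. Every $g\in D_Y(x_0)$ has a Hahn--Banach extension lying in $D_X(x_0)$. That extension must be $f$, so $g=f|_Y$ and $D_Y(x_0)$ is a singleton. It remains to see that a singleton $\{g\}$ has empty interior in $S(Y^*)$ when $\dim Y\ge 2$. Since $\dim Y^*\ge 2$, pick $h\in Y^*$ linearly independent from $g$. Then $(g+th)/\|g+th\|\in S(Y^*)$ differs from $g$ and tends to $g$ as $t\to 0$. So $g$ is not an isolated point, and $(c)$ holds.

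\emph{$(c)\Rightarrow(b)$.} I would argue by contrapositive, and this is the main step. Suppose $D_X(x_0)$ contains $f_1\ne f_2$. I want a two-dimensional $Y\ni x_0$ in which $D_Y(x_0)$ has nonempty interior. Choose $z$ with $f_1(z)\ne f_2(z)$ and set $Y=\mathrm{span}\{x_0,z\}$. Then $f_1|_Y$ and $f_2|_Y$ are distinct elements of $D_Y(x_0)$, since both have norm $1$ and equal $1$ at $x_0$.

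In the two-dimensional space $Y^*$, the set $D_Y(x_0)$ is convex and lies in the line $\{g: g(x_0)=1\}$ intersected with $S(Y^*)$. Because it contains two distinct points, it is a nondegenerate segment $[g_1,g_2]$ on the boundary of $B(Y^*)$. In dimension two $S(Y^*)$ is a closed curve, homeomorphic to a circle. A nondegenerate arc of it contains a relatively open subset, namely its relative interior; this is because in a convex planar body the boundary near an interior point of an edge coincides with that edge. Hence $D_Y(x_0)$ has nonempty interior and $(c)$ fails.

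Alternatively, one can avoid the topology: a two-dimensional non-smooth space lacks MIP by \cite{Ph}, and $Y$ is not smooth at $x_0$. This gives $(a)\Rightarrow(b)$ directly.

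The only delicate points are the planar boundary-topology claim and the Hahn--Banach uniqueness. The latter is what makes $D_Y(x_0)$ a singleton in the direction $(b)\Rightarrow(c)$.
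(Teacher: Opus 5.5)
Your proof is correct but organized differently from the paper's.

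The paper runs the cycle (a)$\Rightarrow$(b)$\Rightarrow$(c)$\Rightarrow$(a):
\begin{itemize}
\item It gets (a)$\Rightarrow$(b) from the introduction's remark that hereditary MIP forces smoothness. That remark rests on Phelps' theorem that MIP and smoothness coincide in dimension two, plus the fact that a space all of whose two-dimensional subspaces are smooth is smooth.
\item It does (b)$\Rightarrow$(c) by the same Hahn--Banach observation you use.
\item It invokes Theorem~\ref{subspacecharthm} only for (c)$\Rightarrow$(a).
\end{itemize}

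You instead read off (a)$\iff$(c) in both directions from Theorem~\ref{subspacecharthm}, after correctly disposing of subspaces of dimension $\le 1$. You then replace the appeal to Phelps with a direct planar argument for (c)$\Rightarrow$(b). A non-singleton $D_X(x_0)$ yields a two-dimensional $Y\ni x_0$ in which $D_Y(x_0)$ is a nondegenerate segment. That segment is a closed proper arc of the circle $S(Y^*)$, so it has nonempty relative interior. You should state explicitly that $Y=\mathrm{span}\{x_0,z\}$ is two-dimensional; this holds because $f_1$ and $f_2$ agree on $\mathbb{R}x_0$, so $z\notin\mathbb{R}x_0$.

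Your route is more self-contained. Apart from Theorem~\ref{subspacecharthm}, it uses only elementary planar convexity. It also supplies a step the paper leaves tacit: a singleton is not relatively open in $S(Y^*)$ when $\dim Y\ge 2$.

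Neither the ``only if'' half of Theorem~\ref{subspacecharthm} nor your planar argument uses the hypothesis $N(X)=\{\pm x_0\}$. So your chain (a)$\Rightarrow$(c)$\Rightarrow$(b) in effect rederives, from Theorem~\ref{char}, the half of Phelps' two-dimensional result that the paper cites. The paper's route is shorter because it simply quotes the general fact that hereditary MIP implies smoothness.
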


\begin{proof}
$(a) \implies (b)$. We have already noted that $(X, \|\cdot\|)$ has the hereditary MIP implies $(X, \|\cdot\|)$ is smooth, and hence, $D_X(x_0)$ is a singleton.

$(b) \implies (c)$. Let $Y$ be a subspace of $(X, \|\cdot\|)$ such that dim $Y \geq 2$ and $x_0 \in Y$. Since $D_X(x_0)$ is singleton, it follows that $D_Y(x_0)$ is also singleton. Hence, $D_Y(x_0)$ has empty interior.

$(c) \implies (a)$ follows from Theorem~\ref{subspacecharthm}.
\end{proof}

\begin{corollary}
The Borwein-Fabian norm on a WCG Asplund space has hereditary MIP, but is not Fr\'echet smooth.

In particular, every Banach space with a separable dual, e.g., $c_0$, has an equivalent norm with the hereditary MIP, that is not Fr\'echet smooth.
\end{corollary}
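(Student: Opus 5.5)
The plan is to reduce everything to Corollary~\ref{thmheremip}. Let $X$ be an infinite dimensional WCG Asplund space; the finite dimensional case is excluded since hereditary MIP implies Fr\'echet smoothness there. By \cite[Theorem 4]{BF}, $X$ carries an equivalent norm $\|\cdot\|$ that is smooth and satisfies $N(X) = \{\pm x_0\}$ for some $x_0 \in S(X)$.

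\textbf{Step 1: hereditary MIP.} Smoothness of the norm means that $D_X(x_0)$ is a singleton. This is condition $(b)$ of Corollary~\ref{thmheremip}, and the implication $(b)\implies(a)$ of that corollary gives that $(X,\|\cdot\|)$ has the hereditary MIP.

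\textbf{Step 2: failure of Fr\'echet smoothness.} Since $x_0 \in N(X)$, the norm is not Fr\'echet smooth at $x_0$. So $(X,\|\cdot\|)$ has the hereditary MIP but is not Fr\'echet smooth, which proves the first assertion.

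\textbf{Step 3: spaces with separable dual.} If $X^*$ is separable, then $X$ is separable, hence WCG, and $X$ is Asplund because its dual is separable. If $X$ is infinite dimensional, Step~2 applies, and the Borwein--Fabian norm is equivalent to the original norm. For $c_0$ in particular, the dual $\ell_1$ is separable.

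\textbf{Main obstacle.} The only delicate point is checking that the Borwein--Fabian theorem really supplies a norm that is smooth everywhere, including at $x_0$, and not merely Fr\'echet smooth off $\{\pm x_0\}$. As quoted in the paper, \cite[Theorem 4]{BF} gives an ``equivalent smooth norm'', so this holds. The remaining steps are direct.
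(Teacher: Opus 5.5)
Your proposal is correct and matches the argument the paper intends; the corollary is stated there without a separate proof. Smoothness of the Borwein--Fabian norm makes $D_X(x_0)$ a singleton, so Corollary~\ref{thmheremip} $(b)\implies(a)$ gives hereditary MIP, while $x_0\in N(X)$ rules out Fr\'echet smoothness; your reduction of the separable-dual case (such a space is separable, hence WCG, and Asplund) and your restriction to infinite-dimensional spaces are also right.
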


Can $N(X)$ be larger, retaining hereditary MIP? The following result gives a sufficient condition.

\begin{theorem} \label{t1}
If $(X, \|\cdot\|)$ is smooth and $N(X)$ is finite, then $(X, \|\cdot\|)$ has hereditary MIP. In particular, $(X, \|\cdot\|)$ is Asplund.
\end{theorem}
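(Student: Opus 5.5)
We plan to mimic the proof of Theorem~\ref{subspacecharthm}, verifying condition $(a)$ of Theorem~\ref{char} for every subspace $Y$ of $X$. First, smoothness is hereditary: if $y \in S(Y)$ had two distinct norming functionals in $S(Y^*)$, each would extend by Hahn--Banach to a norming functional in $D_X(y)$. These extensions are distinct since their restrictions to $Y$ differ, which contradicts smoothness of $X$. So every $Y$ is smooth. Next, $N(Y) \subseteq N(X) \cap Y$: if the norm of $X$ is Fr\'echet differentiable at $y \in S(Y)$, then so is its restriction to $Y$. Hence $N(Y)$ is finite, say $N(Y) = \{y_1, \dots, y_n\}$, and $D_Y(N(Y)) = \{g_1,\dots,g_n\}$ is a finite subset of $S(Y^*)$ because $Y$ is smooth.

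Fix $f \in S(Y^*)$ and $\e > 0$. We want $y \in F(Y)$ with $\|f - D_Y(y)\| < \e$. If $\dim Y = 1$, then $Y$ is trivially Fr\'echet smooth, so assume $\dim Y \geq 2$. Then $S(Y^*)$ has no isolated points: for $g \in S(Y^*)$ and $h$ linearly independent of $g$, the functionals $(g+th)/\|g+th\|$ tend to $g$ as $t \to 0$ and differ from it for small $t \neq 0$. Consequently the relatively open set $U := (B(f,\e)\cap S(Y^*)) \setminus \{g_1,\dots,g_n\}$ is nonempty, since removing finitely many points from a nonempty open subset of a space without isolated points leaves something behind.

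Now apply the Bishop--Phelps theorem in the form used in the proof of Theorem~\ref{subspacecharthm}: norm-attaining functionals are dense in $S(Y^*)$, so some $g \in U$ attains its norm at some $y \in S(Y)$, i.e.\ $g \in D_Y(y)$. Since $g \notin \{g_1,\dots,g_n\} = D_Y(N(Y))$, we get $y \notin N(Y)$, so $y \in F(Y)$ and $\|f - g\| < \e$. Thus $D_Y(F(Y))$ is dense in $S(Y^*)$, and by Theorem~\ref{char} $(a)\implies(b)$, $Y$ has MIP. This gives hereditary MIP. The Asplund conclusion then follows from observation (a) in the introduction: separable subspaces have MIP, hence separable duals, so $X$ is Asplund.

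The only delicate step is making sure the open set $U$ is nonempty. This is exactly where smoothness is used, and the obstruction it rules out is the nonempty interior of $D_Y(x_0)$ in Theorem~\ref{subspacecharthm}. Smoothness makes $D_Y(N(Y))$ a finite set rather than a union of possibly fat faces, and $\dim Y\ge 2$ ensures finite sets have empty interior in $S(Y^*)$.
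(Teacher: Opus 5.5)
Your proof is correct and follows essentially the paper's argument: smoothness makes $D_Y(N(Y))$ a finite set, you remove it from a small relatively open piece of $S(Y^*)$ around $f$, and Bishop--Phelps gives a norm-attaining functional there that can only come from a point of $F(Y)$, so Theorem~\ref{char} $(a)\implies(b)$ applies. The differences are cosmetic. You treat every $\dim Y\ge 2$ uniformly by noting that $S(Y^*)$ has no isolated points, whereas the paper handles finite-dimensional $Y$ separately because smooth finite-dimensional norms are Fr\'echet smooth. You also use the openness of $U$ directly, where the paper chooses $\alpha<\min\{\varepsilon/2,\mathrm{dist}(g,D_Y(N(Y)))\}$.
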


\begin{proof}
Let $Y$ be a subspace of $(X, \|\cdot\|)$. If dim $Y <\infty$ or $N(Y) = \emptyset$, then $Y$ is Fr\'echet smooth; hence has MIP.

Now assume that $Y$ is infinite dimensional and $N(Y) \neq \emptyset$.

Let $\e>0$ and $f \in S(Y^*)$. Since $D_Y$ is single-valued, $D_Y(N(Y))$ is finite. So, $A := (B_Y(f, \e/2) \cap S(Y^*)) \setminus D_Y(N(Y)) \neq \emptyset$. Let $g \in A$. Then, $\text{dist}(g, D_Y(N(Y))) >0$. Choose $0 <\alpha < \min\{\e/2, \text{dist}(g, D_Y(N(Y)))\}$. By Bishop-Phelps theorem, there exist $y \in S(Y)$ and $h \in D_Y(y)$ such that $\|h-g\| <\alpha$. Clearly, $h \notin D_Y(N(Y))$. So, $y \notin N(Y)$ and hence, $y \in F(Y)$. Also, $\|f-h\| <\e$. Therefore, by Theorem~\ref{char} $(a) \implies (b)$, $Y$ has MIP.
\end{proof}

\begin{question}
Are there examples of Banach spaces satisfying the hypothesis of Theorem~\ref{t1}, with $N(X)$ having more than two points?
\end{question}

\begin{question}
If $(X, \|\cdot\|)$ has hereditary MIP, can $N(X)$ be infinite?
\end{question}

\begin{question}
If $(X, \|\cdot\|)$ is smooth and $N(X)$ is compact, can we conclude that $(X, \|\cdot\|)$ has hereditary MIP or is at least Asplund?
\end{question}

\end{document}